\theoremstyle{plain}
\newtheorem{theorem}{Theorem}
\newtheorem{lemma}[theorem]{Lemma}
\newtheorem{proposition}[theorem]{Proposition}
\theoremstyle{definition}
\theoremstyle{remark}
\newcommand{\prob}[1]{\mathbb{P}\left(#1\right)}
\newcommand{\eqd}{\stackrel{(d)}{=}}
\newcommand{\RR}{\mathbb{R}}
\newcommand{\cont}[1]{#1}
\title{Iterated Brownian motion ad libitum is not the pseudo-arc}
\author{J\'er\^{o}me Casse\thanks{Université Paris-Dauphine. Email: \texttt{jerome.casse.math@gmail.com}}\ \ and Nicolas Curien\thanks{Université Paris-Saclay and Institut Universitaire de France.  E-mail: \texttt{nicolas.curien@gmail.com}}}
\date{}
\begin{document}
\maketitle 
\begin{abstract}
  We show that the construction of a random continuum $\mathcal{C}$ from independent two-sided Brownian motions as considered in \cite{KS20} almost surely yields a non-degenerate indecomposable but not-hereditary indecomposable continuum. In particular $\mathcal{C}$ is (unfortunately) not the pseudo-arc.
\end{abstract}

\section{Introduction}
\paragraph{Iterated Brownian motions ad libitum.}
Let $( \mathfrak{B}_i)_{i \geq 1}$ be a sequence of i.i.d.\ two-sided Brownian motions (BM), i.e.\ $( \mathfrak{B}_i(t))_{t \geq 0}$ and $( \mathfrak{B}_i(-t))_{t \geq 0})$ are independent standard linear Brownian motions started from $0$. The $n$th iterated BM is
\begin{equation}
  I^{(n)} =  \mathfrak{B}_1 \circ \dots \circ  \mathfrak{B}_{n}.
\end{equation}
The doubly iterated Brownian motion $I^{(2)}$ has been deeply studied in the 90's. It permits to construct solutions to partial differential equations~\cite{Funaki79} and lots of results about its probabilistic and analytic properties can be found in~\cite{Bertoin96,Burdzy93,BK95,ES99,KL99,OB09,Xiao98} and references therein. Of course $ I^{(n)}$ is wilder and wilder as $n$ increases (see Figure \ref{fig:iteration}) but in \cite{CK14}, second author and Konstantopoulos proved that \emph{the occupation measure} of $I^{(n)}$ over $[0,1]$ converges as $n \to \infty$ towards a random probability measure $\Xi$ which can be though of as iterated Brownian motions ad libitum. This object has then been studied in~\cite{CM16} by the first author and Marckert, and they gave a description of $\Xi$ using invariant measure of an iterated functions system (IFS). However, many distributional properties of $\Xi$ remain open.
 
\begin{figure}[!h]
  \begin{center}
    \includegraphics[height=3cm]{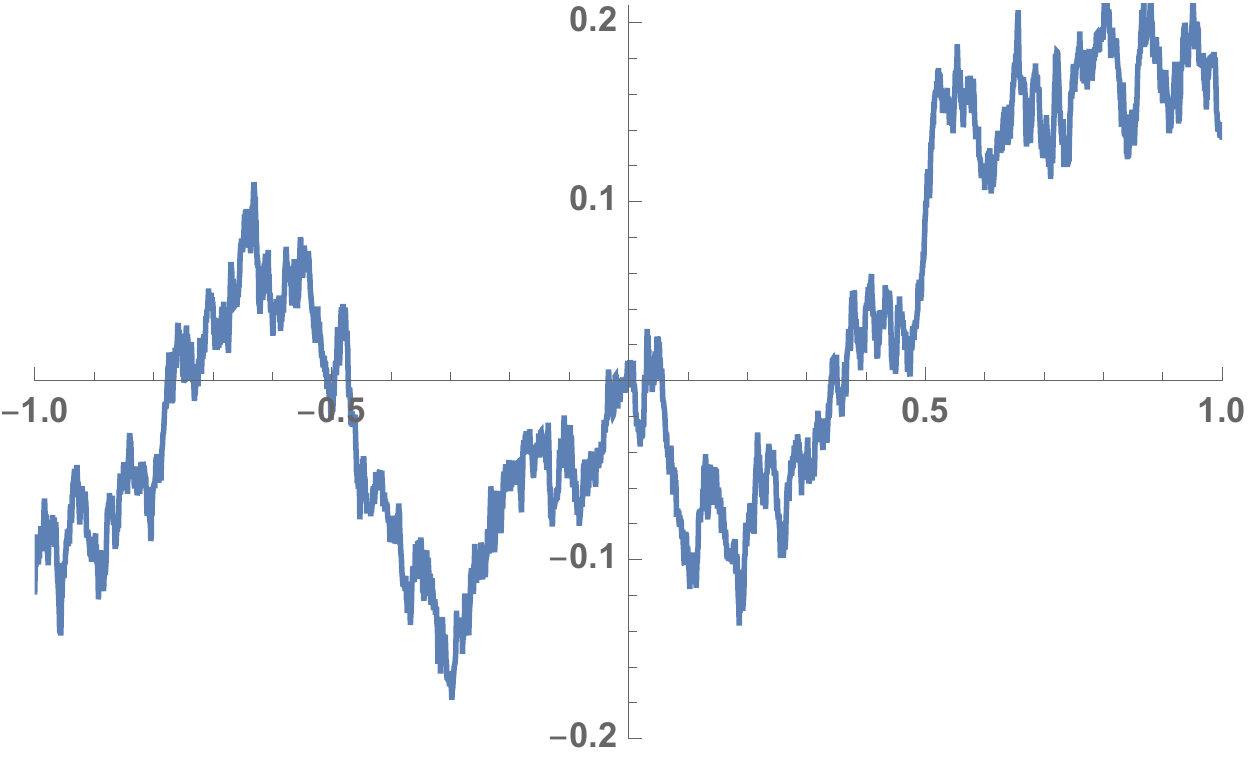}
    \includegraphics[height=3cm]{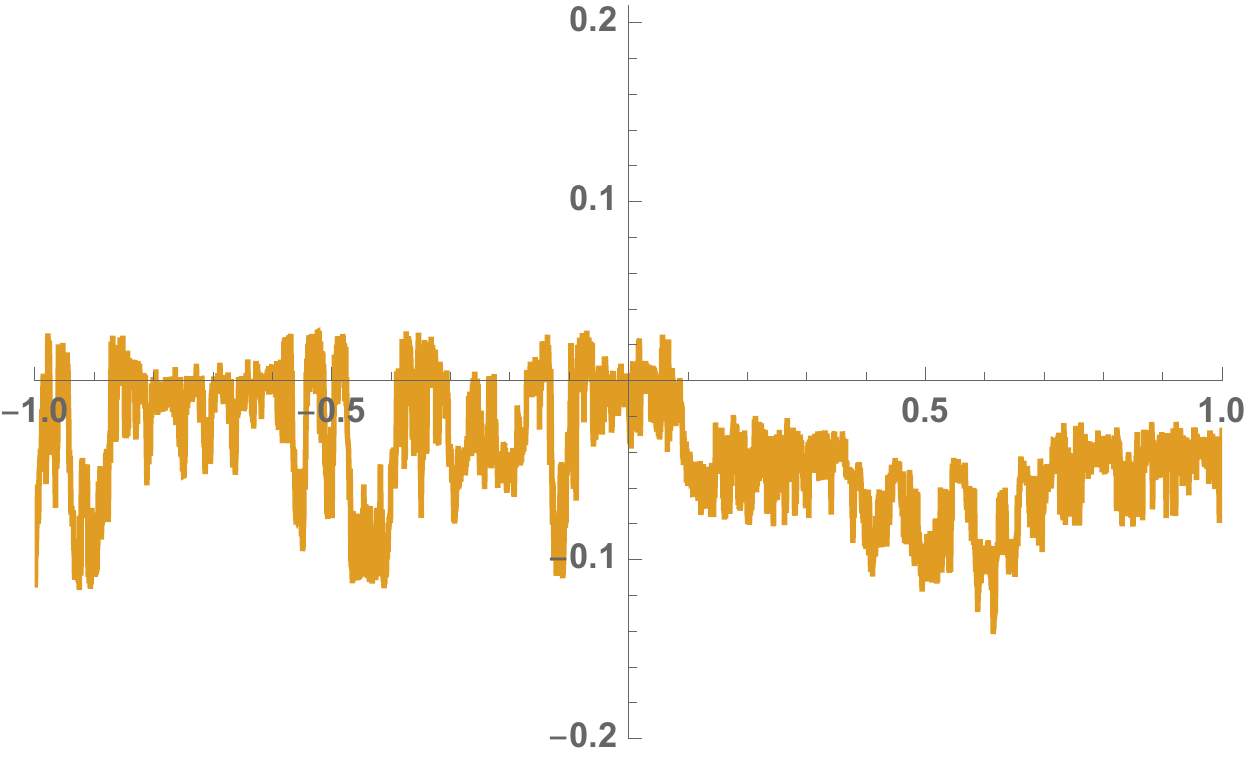}
    \includegraphics[height=3cm]{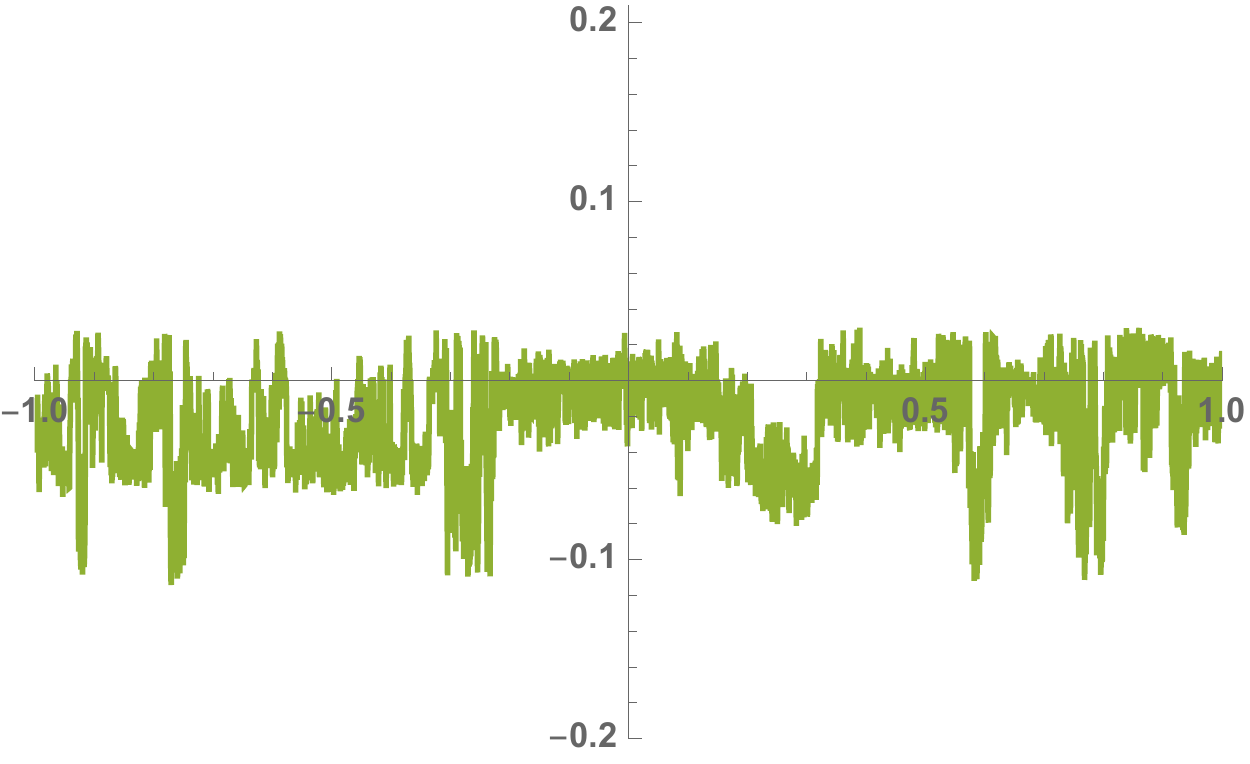}
    \caption{Simulations of $I^{{(1)}},I^{{(2)}}$ and $I^{{(3)}}$, the first three iteration of independent two-sided Brownian motions. The article studies random continuum build out the sequence of $(I^{{(n)}} : n \geq 1)$. \label{fig:iteration}}
  \end{center}
\end{figure}

\paragraph{Continuum and pseudo-arc.} In a recent work, Kiss and Solecki used iterated Brownian motions to define a random \emph{continuum}. Recall that a \emph{continuum} is a nonempty, compact, connected metric space. They were interested by the so-called \emph{pseudo-arc}. The \emph{pseudo-arc} is a homogeneous continuum which is similar to an arc, so similar, that its existence was unclear in the beginning of the last century. A continuum $\cont{C}$ is 
\begin{itemize}
\item \emph{chainable} (also called \emph{arc-like}, see~\cite[Theorem~12.11]{Nadler92}), if for each $\varepsilon > 0$, there exists a continuous function $f: C \to [0,1]$ such that the pre-images of points under $f$ have diameter less than $\varepsilon$. 
\item \emph{decomposable}, if there exist $\cont{A}$ and $\cont{B}$ two subcontinua of $\cont{C}$ such that $\cont{A},\cont{B} \neq \cont{C}$ and $\cont{C} = \cont{A} \cup \cont{B}$. A non decomposable continuum is called \emph{indecomposable}. 
\item \emph{hereditarily indecomposable} if any of its subcontinuum (non reduced to a singleton) is indecomposable.
\end{itemize}
By~\cite{Bing51}, the \emph{pseudo-arc} is the unique (up to homeomorphisms) chainable and hereditarily indecomposable continuum non reduced to a singleton. In particular, any subcontinuum (non reduced to a singleton) of a pseudo-arc is a pseudo-arc. Its name ``pseudo-arc'' comes from this property because arcs have the same property, in the sense that any subcontinuum (non reduced to a singleton) of an arc is an arc. For more information on pseudo-arc, we refer the interested reader to the second paragraph of~\cite[Chapter XII]{Nadler92} and to~\cite{Bing48,Bing51,Knaster22,Moise48}. Sadly, it is very complicated to get a ``drawing'' of the pseudo-arc due to its complicated crocked structure, see~\cite[Exercise 1.23]{Nadler92}. Following the works of Bing, one can wonder whether the pseudo-arc is typical among arc-like continua and ask whether there is a natural probabilistic construction of the pseudo-arc. 

Let us recall the construction of continua from inverse limits used in \cite{KS20}, see \cite[Section II.2]{Nadler92} for details.  Suppose we are given a sequence 
\begin{displaymath}
  \cdots \xrightarrow[]{f_{3}}X_{3} \xrightarrow[]{f_{2}} X_{2} \xrightarrow[]{f_{1}} X_{1}
\end{displaymath}
where for any $i\geq 1$, the metric space $(X_i,d_{i})$ is compact and $f_i : X_{i+1} \to X_i$ is a continuous surjective function. Then the \emph{inverse limit} of $(\{X_i,f_i\})_{i \geq 1}$ is the subspace of $\prod_{i\geq 1} X_i$ defined by
\begin{equation}
\varprojlim( f_{i},X_{i} : i \geq 1) = \left\{(x_i)_{i \geq 1} \in \prod_{i\geq 1} X_i : f_i(x_{i+1}) = x_i \right\}.
\end{equation}
In the application below $X_{i}$ are compact intervals of $ \mathbb{R}$ and in this case, by \cite[Theorems~2.4 and~12.19]{Nadler92}, the inverse limit is a chainable continuum. In \cite{KS20}, Kiss and Solecki constructed a system as above using two-sided independent Brownian motions $( \mathfrak{B}_{i} : i \geq 1)$. More precisely, they proved that for any interval $J$  of $\RR$ with $0 \in J$ and $J \neq \{0\}$, the following limit exists almost surely 
\begin{equation}
  \mathcal{I}_{i} = \lim_{m \to \infty}  \mathfrak{B}_i\left( \mathfrak{B}_{i+1}\left(\dots \left(  \mathfrak{B}_{i+m} \left(J\right)\right)\dots \right) \right), \label{eq:interval}
\end{equation}
and does not depend on $J$, so that we can consider the random chainable continuum $ \mathcal{C}$ obtained as the inverse limit of the system
\begin{displaymath}
  \cdots \xrightarrow[]{ \mathfrak{B}_{3}} \mathcal{I}_{3} \xrightarrow[]{ \mathfrak{B}_{2}}  \mathcal{I}_{2} \xrightarrow[]{ \mathfrak{B}_{1}}  \mathcal{I}_{1}.
\end{displaymath}

Kiss and Solecki proved \cite[Theorem~2.1]{KS20} that the random chainable continuum $ \mathcal{C}$ is almost surely non-degenerate and indecomposable. This note answers negatively the obvious question the preceding result triggers:

\begin{theorem} \label{thm:notp-a}
Almost surely, the random continuum $ \mathcal{C}$ is \emph{not} hereditary indecomposable (hence is not the pseudo-arc).
\end{theorem}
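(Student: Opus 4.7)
The plan is to construct, almost surely, a non-degenerate proper subcontinuum $S$ of $\mathcal{C}$ that decomposes into two proper subcontinua. Recall that any subcontinuum of the inverse limit $\mathcal{C} = \varprojlim(\mathcal{I}_{i}, \mathfrak{B}_{i})$ is of the form $S = \varprojlim(K_{i}, \mathfrak{B}_{i}|_{K_{i+1}})$ where $K_{i} = [a_{i}, b_{i}] \subset \mathcal{I}_{i}$ is a closed subinterval satisfying $\mathfrak{B}_{i}(K_{i+1}) = K_{i}$. I will produce such a tower together with interior splitting points $c_{i} \in (a_{i}, b_{i})$, so that setting $A = \varprojlim[a_{i}, c_{i}]$ and $B = \varprojlim[c_{i}, b_{i}]$ yields $S = A \cup B$ with both pieces proper.

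The key compatibility to enforce at every level $i$ is a strict separation: $\mathfrak{B}_{i}(c_{i+1}) = c_{i}$, $\mathfrak{B}_{i} < c_{i}$ strictly on $[a_{i+1}, c_{i+1})$ and $\mathfrak{B}_{i} > c_{i}$ strictly on $(c_{i+1}, b_{i+1}]$, together with $\min \mathfrak{B}_{i}|_{[a_{i+1}, c_{i+1}]} = a_{i}$ and $\max \mathfrak{B}_{i}|_{[c_{i+1}, b_{i+1}]} = b_{i}$. These conditions imply $\mathfrak{B}_{i}([a_{i+1}, c_{i+1}]) = [a_{i}, c_{i}]$ and $\mathfrak{B}_{i}([c_{i+1}, b_{i+1}]) = [c_{i}, b_{i}]$, so $A$ and $B$ are bona fide subcontinua, non-degenerate and proper in $S$ since $a_{i} < c_{i} < b_{i}$ at every level. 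The strict separation moreover forces $A \cup B = S$: for any $(x_{i}) \in S$ with $x_{1} \in [a_{1}, c_{1}]$, if $i_{0} \geq 2$ were the least index with $x_{i_{0}} \in (c_{i_{0}}, b_{i_{0}}]$, then the strict separation at level $i_{0}-1$ would give $x_{i_{0}-1} = \mathfrak{B}_{i_{0}-1}(x_{i_{0}}) > c_{i_{0}-1}$, contradicting the minimality of $i_{0}$; hence $(x_{i}) \in A$, and symmetrically for $B$.

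The tower $(K_{i}, c_{i})$ is built inductively. Initialize with any non-degenerate $K_{1} \subsetneq \mathcal{I}_{1}$ and interior $c_{1}$ (so that $S \subsetneq \mathcal{C}$ is proper). At stage $i$, given $(a_{i}, b_{i}, c_{i})$, use the Brownian motion $\mathfrak{B}_{i}$ on the independent random interval $\mathcal{I}_{i+1}$ to pick $c_{i+1}$ as an up-crossing of level $c_{i}$ preceded by a below-$c_{i}$ excursion that reaches $a_{i}$ on the left and followed by an above-$c_{i}$ excursion that reaches $b_{i}$ on the right; define $a_{i+1}$ as the last hitting of $a_{i}$ within that below-excursion and $b_{i+1}$ as the first hitting of $b_{i}$ within the above-excursion. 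The main obstacle is to verify that such an up-crossing exists almost surely at every level. This follows from standard Brownian facts: by Itô's excursion theory and the recurrence of Brownian motion, on any positive-length interval the pairs of amplitudes of adjacent below/above excursions around a fixed level explore the whole positive quadrant, so at least one pair exceeding the prescribed thresholds $(c_i - a_i, b_i - c_i)$ exists a.s. Since each level succeeds with probability one and the $\mathfrak{B}_{i}$'s are independent, a countable intersection of probability-one events produces the full tower a.s., yielding the desired decomposable subcontinuum of $\mathcal{C}$.
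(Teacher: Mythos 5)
Your deterministic/topological skeleton is fine and close in spirit to the paper's: a tower of intervals $K_i=[a_i,b_i]$ with a compatible splitting point $c_i$ does produce a decomposable subcontinuum $\varprojlim [a_i,c_i]\cup\varprojlim[c_i,b_i]$, exactly as the paper's overlapping intervals $U_i,V_i$ do. The gap is entirely in the probabilistic step, and it is fatal as written: you claim that at each level the required pattern (an up-crossing of $c_i$ whose adjacent below-excursion reaches $a_i$ and whose adjacent above-excursion reaches $b_i$) exists \emph{almost surely} inside $\mathcal{I}_{i+1}$, invoking recurrence and excursion theory. Those facts concern an infinite time horizon; here the search is confined to the bounded random interval $\mathcal{I}_{i+1}$, and on a bounded interval there are only finitely many excursions of $\mathfrak{B}_i$ around level $c_i$ exceeding any given amplitude. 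With positive probability none of the adjacent below/above pairs meets the thresholds $(c_i-a_i,\,b_i-c_i)$ — e.g.\ the path can reach $a_i$, cross $c_i$ without reaching $b_i$, dip back below $c_i$ without reaching $a_i$, and only then reach $b_i$ — even though $[a_i,b_i]\subset\mathcal{I}_i=\mathfrak{B}_i(\mathcal{I}_{i+1})$ forces $\mathfrak{B}_i$ to attain both $a_i$ and $b_i$ somewhere in $\mathcal{I}_{i+1}$. So each level succeeds only with probability strictly less than $1$, and since your construction gives no mechanism driving these probabilities to $1$, the infinite product of success probabilities may well vanish and the argument does not yield an a.s.\ (or even positive-probability) conclusion. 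A secondary problem is the dependence structure: your target $(a_i,b_i,c_i)$ lies in $\mathcal{I}_i=\mathfrak{B}_i(\mathcal{I}_{i+1})$, hence is \emph{not} independent of $\mathfrak{B}_i$, so level $i$ cannot be treated as a fresh experiment for $\mathfrak{B}_i$ the way your last sentence suggests.

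This quantitative issue is precisely what the paper's proof is organized around. There, the intervals are chosen deterministically tiny and near $0$: $[u_i,v_i]\subset[0,\varepsilon^{(5/4)^{i-1}}]$, so that before the very small time $a_i^{5/4}$ the Brownian motion $\mathfrak{B}_i$ already makes of order $a_i^{-1/4}$ i.i.d.\ excursions from $u_i$ to $v_i$, each having a fixed positive probability of the required ``good shape''; this makes the per-level failure probability at most $2a_i^{1/8}$, summable fast enough that the product of success probabilities is positive, and letting $\varepsilon\to0$ gives probability one. Moreover the paper never needs $[u_i,v_i]\subset\mathcal{I}_i$ (which would reintroduce the dependence problem you face); only the limiting intervals $U_i,V_i$ must lie in $\mathcal{I}_i$, and that holds automatically because $[u_{i+n},v_{i+n}]\subset[0,1]$. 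To repair your proof you would need an analogous mechanism forcing the per-level success probabilities to tend to $1$ (shrinking targets placed where excursions are plentiful), at which point you essentially recover the paper's argument.
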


The proof below could be adapted to prove that a random continuum constructed similarly from a sequence of i.i.d.\ \emph{reflected} Brownian motions is neither a pseudo-arc, answering a question in~\cite[Section~3.1.1]{KS20}. Although almost surely not homeomorphic to the pseudo-arc, the random continuum $ \mathcal{C}$ is interesting in itself and one could ask about its topological property, e.g.~we wonder whether the topology of $ \mathcal{C}$ is almost surely constant and if it is easy to characterise.\\ 

\noindent \textbf{Acknowledgements:} We acknowledge support from the \texttt{ERC 740943} ``GeoBrown'' and \texttt{ANR 16-CE93-0003} ``MALIN''.

\section{Finding good intervals}
In the rest of the article the Brownian motions $ \mathfrak{B}_{i}$ are fixed and we recall the definition of $ \mathcal{I}_{i}$ in \eqref{eq:interval} and of the continuum $ \mathcal{C}$. We will show that Theorem \ref{thm:notp-a} follows from the proposition below stated in terms of images of intervals under the flow of independent Brownian motions whose proof occupy the remaining of the article:
\begin{proposition} \label{prop:UV}
 For any $\varepsilon>0$ small enough, with probability at least $$p_{\varepsilon} = \prod_{i=1}^\infty 1-2 \left( \varepsilon^{{(5/4)}^{i-1}} \right)^{1/8} > 0,$$ there exists two sequences $(U_i)_{i \geq 1}$ and $(V_i)_{i \geq 1}$ of subintervals of $\RR$ such that, for any $i \geq 1$, the five following conditions are satisfied
  \begin{enumerate}
  \item $U_i,V_i \subset  \mathcal{I}_{i}$ where $ \mathcal{I}_{i}$ is defined in \eqref{eq:interval},
  \item $U_i \nsubseteq V_i$ and $V_i \nsubseteq U_i$,
  \item $U_i \cap V_i \neq \emptyset$,
  \item $U_i= \mathfrak{B}_i(U_{i+1})$ and $V_i =  \mathfrak{B}_i(V_{i+1})$,
  \item $|U_i|,|V_i| \leq \varepsilon^{(5/4)^{i-1}}$.
  \end{enumerate}
\end{proposition}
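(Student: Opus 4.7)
The plan is an inductive construction of the sequences $(U_i, V_i)_{i \geq 1}$, with the step from level $i$ to $i+1$ using only the Brownian motion $\mathfrak{B}_i$. Since the $\mathfrak{B}_i$ are independent, the failure events at different levels are independent; if each step has failure probability at most $2\varepsilon_i^{1/8}$ (writing $\varepsilon_i := \varepsilon^{(5/4)^{i-1}}$), the overall success probability is at least $p_\varepsilon$. The base case ($i = 1$) is handled by fixing $U_1, V_1$ as two small overlapping non-nested intervals near $0$ inside $\mathcal{I}_1$; since $\mathcal{I}_1$ almost surely contains a neighborhood of $0$, this is possible for $\varepsilon$ small enough.

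For the inductive step, write $U_i = [\alpha_U, \beta_U]$ and $V_i = [\alpha_V, \beta_V]$, without loss of generality with $\alpha_U < \alpha_V \leq \beta_U < \beta_V$ (so that $U_i, V_i$ overlap and are non-nested). Pick $p$ in the overlap $U_i \cap V_i$ and a preimage $q \in \mathcal{I}_{i+1}$ of $p$ under $\mathfrak{B}_i$ (such $q$ exists since $\mathcal{I}_i = \mathfrak{B}_i(\mathcal{I}_{i+1})$). The goal is to locate times $s_U < s_V \leq q \leq t_U < t_V$ within a window of diameter at most $\varepsilon_i^{5/4}$ around $q$ such that $\mathfrak{B}_i(s_U) = \alpha_U$, $\mathfrak{B}_i(s_V) = \alpha_V$, $\mathfrak{B}_i(t_U) = \beta_U$, $\mathfrak{B}_i(t_V) = \beta_V$, with $\mathfrak{B}_i$ remaining in $[\alpha_U, \beta_U]$ on $[s_U, t_U]$ and in $[\alpha_V, \beta_V]$ on $[s_V, t_V]$---that is, a pair of overlapping up-crossings of $U_i$ and $V_i$. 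Setting $U_{i+1} := [s_U, t_U]$ and $V_{i+1} := [s_V, t_V]$ then produces sequences satisfying all five properties at level $i+1$: by construction, $\mathfrak{B}_i(U_{i+1}) = U_i$ and $\mathfrak{B}_i(V_{i+1}) = V_i$, both contain $q$ hence overlap, $s_U < s_V$ and $t_U < t_V$ gives non-nesting, and the size bound $\varepsilon_i^{5/4}$ is enforced by choice of the window.

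To bound the failure probability by $2\varepsilon_i^{1/8}$, I would use Brownian scaling. An up-crossing of an interval of length $\varepsilon_i$ takes time of order $\varepsilon_i^2$, so the search window of size $\varepsilon_i^{5/4}$ around $q$ contains roughly $\varepsilon_i^{-3/4}$ disjoint sub-windows of that scale. By the strong Markov property, each sub-window offers an independent chance of producing the target pattern, leading to a failure probability that is exponentially small in $\varepsilon_i^{-3/4}$, easily dominated by $2\varepsilon_i^{1/8}$ for $\varepsilon_i$ small. The main obstacle will be to show that within a single sub-window the probability of the \emph{joint} pattern---BM climbing from $\alpha_U$ to $\beta_V$ while hitting the four intermediate levels in the prescribed order and staying inside the two prescribed strips---is bounded below by a positive constant uniformly over the admissible configurations of $U_i, V_i$ (in particular as $|U_i \cap V_i|$ may become very small). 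Establishing this uniform lower bound via exit-time and excursion estimates for Brownian motion is the delicate probabilistic input at the heart of the argument.
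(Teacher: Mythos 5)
Your overall architecture (build $U_{i+1},V_{i+1}$ step by step as crossing-time intervals, using only $\mathfrak{B}_i$ at step $i$) is different from the paper's, and as stated it has two genuine gaps. The first is the point you yourself flag as ``delicate'': the uniform lower bound is simply false. If the path is at $\beta_U$ at time $t_U$ and must then reach $\beta_V$ before returning to $\alpha_V$ (as your staying condition on $[s_V,t_V]$ forces), the gambler's ruin estimate bounds the success probability of each attempt by $|U_i\cap V_i|/|V_i|$, which is \emph{not} bounded below over configurations satisfying only points 2, 3, 5: nothing in your induction prevents the overlap fraction from degenerating from one level to the next, so the per-window probability cannot be taken uniformly positive and the induction does not close. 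You would have to strengthen the inductive hypothesis with a quantitative control of the relative geometry of $U_i,V_i$ and show that the crossing pattern reproduces it; this is exactly what the paper's device accomplishes. There the randomness is only used to find, at each level, a single excursion of $\mathfrak{B}_i$ from $u_i$ to $v_i$ with a prescribed ``good shape'' (staying in a fixed pentomino-shaped region), and $U_i,V_i$ are then defined \emph{a posteriori} as decreasing limits of images of the left and right two-thirds of $[u_{i+n},v_{i+n}]$; the good shape deterministically guarantees the nesting needed for the limits to exist and points 2--4 at every level, with no per-level probabilistic control of overlaps. A second, more minor, quantitative slip: in a time window of length $\varepsilon_i^{5/4}$ you cannot expect $\varepsilon_i^{-3/4}$ completed crossing attempts of an interval at height of order $\varepsilon_i$, because the hitting time of level $k\varepsilon_i$ scales like $k^2\varepsilon_i^2$ (the path wanders away between attempts); the correct count is of order $\varepsilon_i^{-3/8}$, and the paper accordingly uses only $k=\lfloor a_i^{-1/4}\rfloor$ attempts, getting failure probability $(1-r)^k + O(a_i^{1/8})$ rather than anything exponentially small in $\varepsilon_i^{-3/4}$. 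This slip alone would not kill your plan, but the stated heuristic is wrong.

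The second structural gap is the independence claim. Your inductive step needs a preimage $q\in\mathcal{I}_{i+1}$ of $p$ and a window around $q$ contained in $\mathcal{I}_{i+1}$ (otherwise point 1 fails at level $i+1$), but $\mathcal{I}_{i+1}$ is a function of $\mathfrak{B}_{i+1},\mathfrak{B}_{i+2},\dots$. Hence the configuration $(U_{i+1},V_{i+1})$ handed to the next step is \emph{not} independent of $\mathfrak{B}_{i+1}$, so you can neither multiply per-level failure bounds nor invoke the strong Markov property with ``fresh'' randomness for the next level, and in any case the event that a window of prescribed size around $q$ fits inside $\mathcal{I}_{i+1}$ has no a priori probability bound. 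The paper sidesteps this entirely: all intervals $[u_{i+1},v_{i+1}]$ are sought in the deterministic window $[0,a_i^{5/4}]$ near $0$ using only $\mathfrak{B}_i$ and data measurable with respect to $\mathfrak{B}_1,\dots,\mathfrak{B}_{i-1}$ (this is also what supplies the many crossing attempts, since the levels $u_i,v_i$ are at distance $O(a_i)$ from the starting point $0$), so the per-level events are genuinely independent; the inclusion $U_i,V_i\subset\mathcal{I}_i$ is never imposed during the induction but recovered at the very end from $[u_{i+n},v_{i+n}]\subset[0,1]$ and the limit definition \eqref{eq:interval} of $\mathcal{I}_i$. To repair your argument you would need to do something similar: drop point 1 from the induction, keep every construction inside deterministic neighborhoods of $0$, and add a self-reproducing quantitative shape condition playing the role of the paper's good-shape excursions.
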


\smallskip
\begin{proof}[Proof of Theorem~\ref{thm:notp-a} given Proposition~\ref{prop:UV}.]
  In the proof, since we are always working with the functions $ \mathfrak{B}_{i}$ we write $\varprojlim( W_{i} : i \geq 1)$ for the inverse limit previously denoted by $\varprojlim(\mathfrak{B}_{i},W_{i}: i \geq 1)$ for any sequence of intervals $W_{1}, W_{2}, ...$ such that $W_{i+1} \xrightarrow{\mathfrak{B}_{i}} W_{i}$. On the event described in the above proposition we have with probability at least $p_\varepsilon > 0$:
  \begin{itemize}
  \item For any $i \geq 1$, $ \mathfrak{B}_i(U_{i+1} \cup V_{i+1}) = U_i \cup V_i$ (point 4) and $U_i \cup V_i \subset  \mathcal{I}_{i}$ (point 1) and $U_i \cup V_i$ is an interval (point 3), so by~Lemma~2.6 of~\cite{Nadler92}, $\varprojlim(U_i \cup V_{i} : i \geq 1)$ is a subcontinuum of $ \mathcal{C}$. 
  \item By Lemma~2.6 of~\cite{Nadler92}, both $\varprojlim(U_i : i \geq 1)$ and $\varprojlim( V_{i} : i \geq 1)$ are also subcontinua of $\varprojlim(U_i \cup V_{i} : i \geq 1)$.
  \item Let $x = (x_i)_{i \geq 1} \in \varprojlim(U_i \cup V_{i} : i \geq 1)$, then
    \begin{itemize}
    \item either, for any $i$, we have $x_i \in U_i \cap V_i$, and so $x \in \varprojlim(U_i  : i \geq 1)$ and $x \in \varprojlim(V_i  : i \geq 1)$,
    \item or there exists $j \geq 1$ such that  $x_j \in U_j$ and  $x_j \notin V_j$, but then by point $4$ we have $x_i \in U_i$ for all $i \geq j$ and so $x \in \varprojlim(U_i  : i \geq 1)$,
    \item or there exists $j \geq 1$ such that  $x_j \notin U_j$ and  $x_j \in V_j$ and similarly we deduce that $x \in \varprojlim(V_i  : i \geq 1)$.
    \end{itemize}
    Hence, $\varprojlim(U_i \cup V_i  : i \geq 1) \subset \varprojlim(U_i  : i \geq 1) \cup \varprojlim(V_i  : i \geq 1)$ and the reverse inclusion is obvious.
  \item $\varprojlim(U_i \cup V_i  : i \geq 1) \ne \varprojlim(U_i  : i \geq 1)$ nor $\varprojlim(U_i \cup V_i  : i \geq 1) \ne \varprojlim(V_i  : i \geq 1)$ by combining point $2$ and point 4.
  \end{itemize}
  All of these points imply that $\varprojlim(U_i \cup V_i  : i \geq 1)$ is a decomposable subcontinuum of  $ \mathcal{C} = \varprojlim( \mathcal{I}_i : i \geq 1)$. That implies that $ \mathcal{C}$ is not a pseudo-arc with probability at least $p_\varepsilon$ for any $\varepsilon > 0$. As $p_\varepsilon \to 1$ when $\varepsilon \to 0$, it is not a pseudo-arc with probability one.
\end{proof}

\subsection{Construction of a decomposable subcontinuum using good shape excursions} \label{sec:decomposable}
Let us now explain the idea behind the construction of the intervals of Proposition \ref{prop:UV}. This relies on the concept of  excursions with a good shape. Imagine that we have a sequence of non trivial intervals $[u_{i},v_{i}] \subset [0,1]$ such that $ \mathfrak{B}_{i}([u_{i+1},v_{i+1}]) = [u_{i}, v_{i}]$ and furthermore that $ \mathfrak{B}_{i}(u_{i+1}) = u_{i}$ and $ \mathfrak{B}_{i}(v_{i+1})= v_{i}$ and $ \mathfrak{B}_{i}(t) \in (u_{i},v_{i})$ for $t \in (u_{i+1}, v_{i+1})$. In words, over the time interval $ [u_{i+1}, v_{i+1}]$, the Brownian motion $ \mathfrak{B}_{i}$ makes an excursion from $u_{i}$ to $v_{i}$. We say that this excursion has a \emph{good shape} if it stays in the pentomino of Figure \ref{fig:pentamino}.

\begin{figure}[!h]
  \begin{center}
    \includegraphics{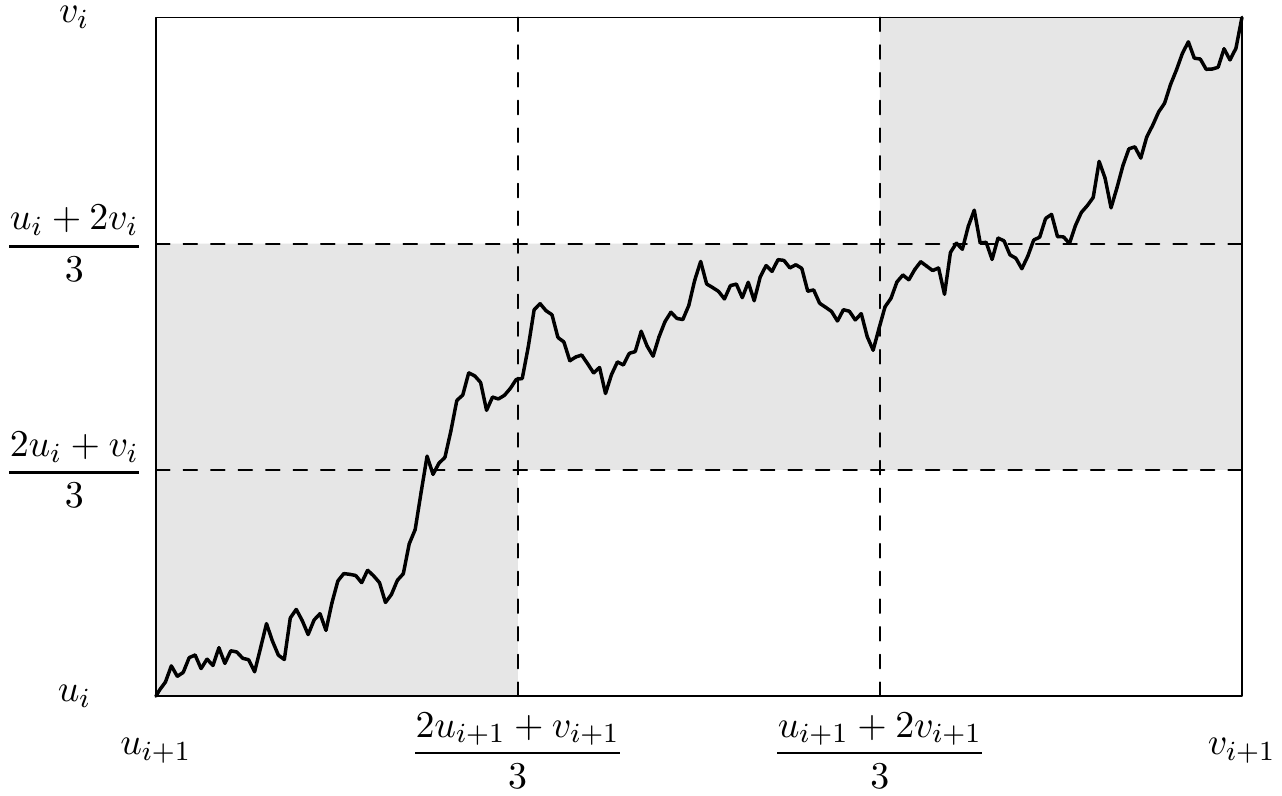}
    \caption{An excursion from $u_{i}$ to $v_{i}$ over the time interval $[u_{i+1}, v_{i+1}]$ has a good shape if it stays in the light grey region.} \label{fig:pentamino}
  \end{center}
\end{figure}
 
If we have such a sequence of intervals and excursions, then one can define a sequence of intervals $U_{i}, V_{i}$ by setting for any $i \geq 1$,
\begin{align*}
  & U_i = \lim_{n \to \infty} \underbrace{\left(  \mathfrak{B}_i \circ \mathfrak{B}_{i+1} \circ \dots \circ \mathfrak{B}_{i+n-1} \right) \left( \left[ u_{i+n},\frac{u_{i+n}+2v_{i+n}}{3} \right] \right)}_{U_{i,n}} \text{ and }\\
  & V_i = \lim_{n \to \infty} \left( \mathfrak{B}_i \circ \mathfrak{B}_{i+1} \circ \dots \circ \mathfrak{B}_{i+n-1} \right) \left( \left[ \frac{2u_{i+n}+v_{i+n}}{3},v_{i+n} \right] \right).
\end{align*}

First, these two limits exist a.s.\ and are closed intervals a.s.\ because they are limits of a sequence of decreasing closed intervals. Indeed, because $ \mathfrak{B}_{i+n}$ performs a good shape excursion from $u_{i+n}$ to $v_{i+n}$ over $[u_{i+n+1},v_{i+n+1}]$ we have
\begin{align*}
   \mathfrak{B}_{i+n}\left( \left[u_{i+n+1},\frac{u_{i+n+1}+2v_{i+n+1}}{3} \right] \right) & \subset \left[u_{i+n},\frac{u_{i+n}+2v_{i+n}}{3} \right], \text{ and so}\\
  U_{i,n+1} & \subset U_{i,n},
\end{align*}
and $U_{i,n}$ are intervals because the BM is continuous a.s. It is then an easy matter to check that the interval constructed above satisfies points 2-4 of Proposition \ref{prop:UV}. Our task is thus to construct the sequence $u_{i},v_{i}$ so that $ \mathfrak{B}_{i}$ performs a good shape excursion from $u_{i}$ to $v_{i}$ over $[u_{i+1}, v_{i+1}]$ and to ensure points $1$ and $5$ of Proposition \ref{prop:UV}. The key idea is to look for these intervals in the vicinity of $0$ because any given small interval close to $0$ has MANY pre-images close to $0$ by a Brownian motion. These many pre-images enable us to select one with a good shape.

\subsection{Pre-images of a small interval by a Brownian motion}
In the following lemma the dependence in $i$ is superfluous but we keep it to make the connection with the preceding discussion easier to understand.
\begin{lemma} \label{lem:uvi}
  Let $a_i$ be any real positive number small enough. Fix $[u_{i},v_{i}] \subset [0, a_i]$. Then with probability at least
  \begin{displaymath}
    1 - 2 a_i^{1/8}
  \end{displaymath}
  we can find $[u_{i+1},v_{i+1}] \subset [0, a_i^{5/4}]$ so that $ \mathfrak{B}_{i}$ performs an excursion with a good shape from $u_{i}$ to $v_{i}$ over the time interval $[u_{i+1}, v_{i+1}]$.
\end{lemma}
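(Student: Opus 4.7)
The plan is to exploit the fact that the time budget $a_i^{5/4}$ is very large compared to the natural timescale $(v_i - u_i)^2 \leq a_i^2$ of an excursion of $\mathfrak{B}_i$ across the interval $[u_i, v_i]$: within $[0, a_i^{5/4}]$ the Brownian motion $\mathfrak{B}_i$ will perform of order $a_i^{-3/4}$ successive excursions from $u_i$ to $v_i$, each of which constitutes an \emph{independent trial} for the good-shape event. Since the good-shape probability should be bounded away from $0$ independently of $a_i$ by Brownian scaling, it will suffice that at least one trial succeeds, and with many trials this will happen with probability very close to $1$.

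Concretely, I would proceed in three steps. First, by the reflection principle, the first hitting time $T$ of $u_i$ by $\mathfrak{B}_i$ satisfies $\prob{T > a_i^{5/4}/2} \leq c\, u_i/\sqrt{a_i^{5/4}} \leq c \, a_i^{3/8}$, which is negligible in front of $a_i^{1/8}$ for small $a_i$. Second, starting afresh at $u_i$ by the strong Markov property, I would introduce alternating stopping times $\tau_0 = T$, $\rho_k = \inf\{t > \tau_k : \mathfrak{B}_i(t) = v_i\}$ and $\tau_{k+1} = \inf\{t > \rho_k : \mathfrak{B}_i(t) = u_i\}$; the portion of $\mathfrak{B}_i$ between the \emph{last} visit to $u_i$ before $\rho_k$ and $\rho_k$ itself is an excursion from $u_i$ to $v_i$ in the sense of the lemma. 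Standard hitting-time estimates then imply that at least $N := \lceil (\log (1/a_i))^2 \rceil$ such cycles fit in the remaining time $a_i^{5/4}/2$ except on a set of probability $O(a_i^{1/8})$. Third, by Brownian scaling the probability that any given cycle produces a good-shape excursion is bounded below by a positive constant $p_0$ independent of $a_i$, $u_i$, $v_i$, because the good-shape condition is a scale-invariant constraint on the normalized excursion. Strong Markov applied at the $\rho_k$ makes these $N$ trials independent, so the probability that none succeeds is at most $(1-p_0)^N$, which is much smaller than $a_i^{1/8}$. Summing the three failure probabilities then gives the bound $2\, a_i^{1/8}$.

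The main obstacle is the third step: rigorously establishing the uniform lower bound $p_0 > 0$ for the probability that a Brownian excursion between two levels stays inside the pentomino of Figure~\ref{fig:pentamino}. This requires identifying the correct scaling (time by $c^2$, space by $c$) under which the pentomino is invariant, and then estimating from below the relevant Brownian excursion probability, e.g.\ via a first-passage decomposition or Williams' path decomposition of the excursion straddling the levels $u_i$ and $v_i$. Once this constant $p_0$ is in hand, the remaining two steps are routine Brownian computations.
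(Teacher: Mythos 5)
Your proposal is correct and follows essentially the same route as the paper: decompose $\mathfrak{B}_i$ into successive i.i.d.\ excursions from $u_i$ to $v_i$, show via hitting-time tails (the Lévy law, $\prob{\mathcal{T}_{2ka_i} > a_i^{5/4}} \lesssim k\,a_i^{3/8}$) that many such trials are completed before time $a_i^{5/4}$ except on an event of probability $o(a_i^{1/8})$, and use that each trial yields a good-shape excursion with probability bounded below by a constant. The only real difference is the step you single out as the main obstacle: instead of Williams' decomposition, the paper obtains the uniform constant $r>0$ softly, combining Brownian scaling with the full support of Wiener measure (forcing the excursion to stay in a small uniform tube around a continuous function drawn inside the pentomino of Figure~\ref{fig:pentamino}), which you may find simpler than an explicit excursion computation.
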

\begin{proof}
Fix $0<u_{i}< v_{i}$ and consider the successive excursions $\mathcal{E}_{1}, \mathcal{E}_{2}, ...$ that the Brownian motion $ \mathfrak{B}_{i}$ performs from $u_{i}$ to $v_{i}$  over the respective time intervals $[u_{i+1}^{(1)}, v_{i+1}^{(1)}],[u_{i+1}^{(2)}, v_{i+1}^{(2)}], \cdots$. By the Markov property of Brownian motion and standard argument in excursion theory, these excursions are i.i.d. We claim that 
$$ r = \mathbb{P}( \mathcal{E} \mbox{ has a good shape}) > 0.$$
Indeed, since the law of Brownian motion has full support in the space of continuous functions (with the topology of uniform convergence over all compacts of $ \mathbb{R}_{+}$), the first excursion from $u_{i}$ to $v_{i}$ might be close to any prescribed continuous function and in particular, the probability to have a good shape is strictly positive. See Figure \ref{fig:tube}. 

\begin{figure}[!h]
  \begin{center}
    \includegraphics{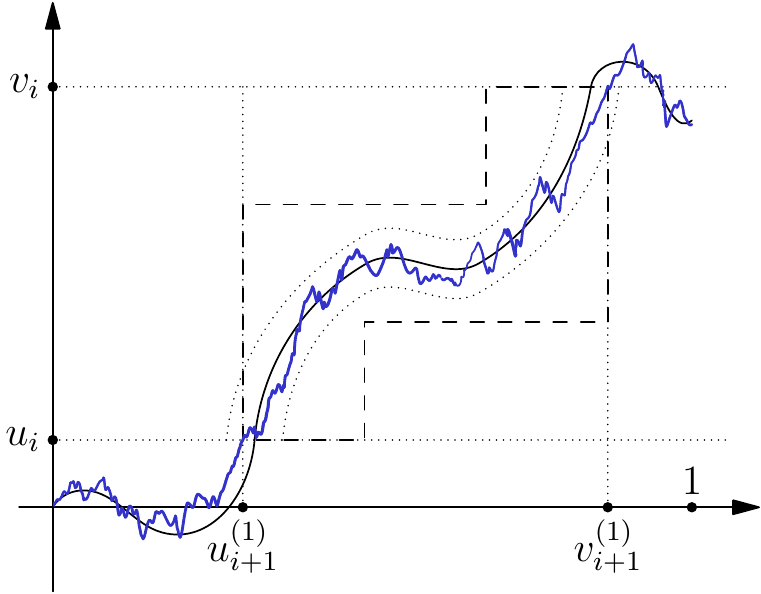}
    \caption{For any given continuous function $f$ starting from $0$ and any $ \varepsilon>0$, the Brownian motion may stay within distance $ \varepsilon>0$ of $f$ up to time $1$ with a positive probability. Choosing $f$ carefully, we deduce that the first excursion from $u_{i}$ to $v_{i}$ has a good shape with positive probability.} \label{fig:tube}
  \end{center}
\end{figure}

Hence, the probability that at least one of the $k$ first excursions has a good shape is at least
\begin{displaymath}
  1 - (1-r)^k.
\end{displaymath}

\begin{figure}[h]
  \begin{center}
    \includegraphics{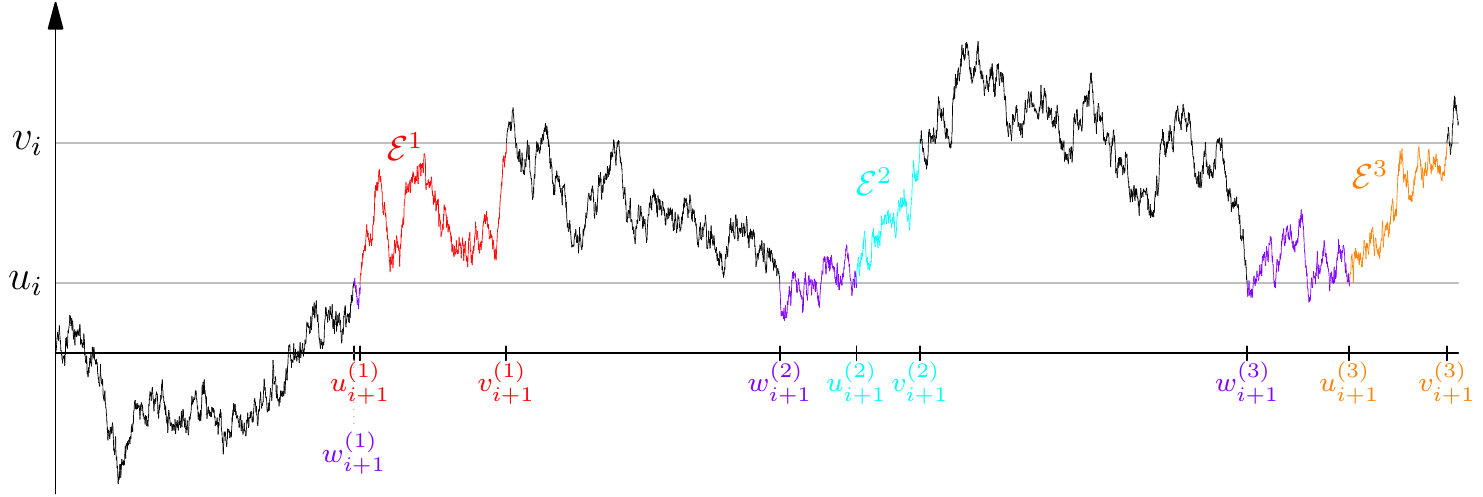}
    \caption{In red, blue and orange, the excursion from $u_i$ to $v_i$ we consider.} \label{fig:excursions1}
  \end{center}
\end{figure}
To control the number of excursions from $u_{i}$ to $v_{i}$ performed up to time $a_{i}$ by $ \mathfrak{B}_{i}$, we introduce the auxiliary stopping times defined by $w_{i+1}^{(1)}= \inf \{t \geq 0: \mathfrak{B}_{i}(t) = u_{i}\}$ and for $k \geq 2$
$$ w_{i+1}^{(k)} = \inf \{t \geq v_{i+1}^{(k-1)} : \mathfrak{B}_{i}(t) = u_{i}\}.$$
Hence $w_{i+1}^{(1)} < v_{i+1}^{(1)} < w_{i+1}^{(2)} < v_{i+1}^{(2)} < \cdots$ are the successive hitting times of $u_{i},v_{i}, u_{i}, v_{i}$ by  $\mathfrak{B}_{i}$, see Figure \ref{fig:excursions1}. For $a \geq 0$, we let $ \mathcal{T}_{a} = \inf\{ t \geq 0 : \mathfrak{B}_{i}(t)=a\}$ the hitting time of $a$ by a standard linear Brownian motion. It is classic (see e.g. \cite[Theorem 2.35]{MP10}) that for $a >0$ we have $ \mathcal{T}_{a} = a^{2} \cdot \mathcal{T}_{1}$ in law where $ \mathcal{T}_{1}$ is distributed according to the L\'evy law 
\begin{displaymath}
  \mathcal{T}_{1} \underset{(d)}{=} \frac{  \mathrm{d}t}{ \sqrt{2 \pi t^{3}}} \mathrm{exp}\left(- \frac{1}{2t}\right) \mathbf{1}_{t>0}.
\end{displaymath}
In our case, applying the strong Markov property at time $w_{i+1}^{(1)} < v_{i+1}^{(1)} < w_{i+1}^{(2)} < v_{i+1}^{(2)} < \cdots$ and using invariance by symmetry we deduce that we have the equalities in distribution 
$$ w_{i+1}^{(1)} \eqd \mathcal{T}_{u_{i}},\quad  v_{i+1}^{(1)} \eqd \mathcal{T}_{u_{i}+ |v_{i}-u_{i}|}, \quad w^{(2)}_{i+1} \eqd  \mathcal{T}_{u_i + 2 |v_i-u_i|}, \cdots , \quad v^{(k)}_{i+1} \eqd  \mathcal{T}_{u_i + (2k-1) |v_i-u_i|},$$ for $k \geq 2$.
Since $ \mathcal{T}_{u_i + (2k-1)|v_i-u_i|} \leq  \mathcal{T}_{2k a_i}$, the probability that the first $k$ excursions of $\mathfrak{B}_i$ occurs before $a_i^{5/4}$ is at least
\begin{displaymath}
  \prob{ \mathcal{T}_{2ka_i} < a_i^{5/4}}=   \prob{ \mathcal{T}_{1} <  \left(\frac{1}{2 k \, a_{i}^{3/8}}\right)} \geq 1 -  \sqrt{\frac{2}{\pi}} 2 k a_i^{3/8} \text{ (for $ka_i^{3/8}$ small enough)}.
\end{displaymath}
Gathering-up the above remarks and taking $k = \lfloor a_i^{-1/4}\rfloor$, we deduce that the probability to do not find an excursion from $u_i$ to $v_i$ with a good shape in $[0,a_i^{5/4}]$ is bounded above by
\begin{displaymath}
  (1-r)^{\lfloor a_i^{-1/4} \rfloor} + 2 \sqrt{\frac{2}{\pi}} \lfloor a_i^{-1/4} \rfloor a_i^{3/8}  \leq 2 a_i^{-1/8} \text{ (for $a_i$ small enough)}. \qedhere
\end{displaymath}
\end{proof}

\section{Proof of Proposition~\ref{prop:UV}} \label{sec:UV}
Let $(\mathfrak{B}_i)_{i \geq 1}$  be a sequence of i.i.d.\ two-sided Brownian motions, and $\varepsilon$ be any real positive number small enough. For any $i \geq 1$, take $a_i = \varepsilon^{(5/4)^{i-1}}$.

Firstly, we put $[u_1,v_1] = [0,\varepsilon] = [0,a_1]$, by Lemma~\ref{lem:uvi}, with probability at least $1 - 2 a_1^{1/8}$, there exists an interval $[u_2,v_2] \subset [0,a_1^{5/4}] = [0,a_2]$ such that $\mathfrak{B}_1$ performs a good shape excursion from $u_1$ to $v_1$ over the time interval $[u_2,v_2]$. Now, we apply Lemma~\ref{lem:uvi} to $[u_2,v_2] \subset [0, a_2]$, etc. At the end, with probability at least
\begin{displaymath}
  \prod_{i=1}^\infty 1 - 2 \left( \varepsilon^{{(5/4)}^{i-1}} \right)^{1/8},
\end{displaymath}
we obtain a sequence of non trivial intervals $([u_i,v_i])_{i \geq 1}$ such that for any $i$, $\mathfrak{B}_i$ makes a good shape excursion from $u_i$ to $v_i$ over  $[u_{i+1},v_{i+1}]$. By Section~\ref{sec:decomposable}, we can then construct two sequences of intervals $U_i$, $V_i$ that satisfy points 2-4 of Proposition~\ref{prop:UV}. Moreover, by construction, $U_i,V_i \subset [u_{i},v_{i}] \subset [0,a_i]$, hence point 5 is also satisfied.\par
Finally, to obtain point 1, just remark that, for any $i,n \geq 1$, $[u_{i+n},v_{i+n}] \subset [0,a_{i+n}] \subset [0,1]$, so
\begin{align*}
  U_i & = \lim_{n \to \infty} \left(  \mathfrak{B}_i \circ \mathfrak{B}_{i+1} \circ \dots \circ \mathfrak{B}_{i+n} \right) \left( \left[ u_{i+n+1},\frac{u_{i+n+1}+2v_{i+n+1}}{3} \right] \right) \\
      & \subset \lim_{n \to \infty} \left(  \mathfrak{B}_i \circ \mathfrak{B}_{i+1} \circ \dots \circ \mathfrak{B}_{i+n} \right) \left( \left[ 0,1  \right] \right) = \mathcal{I}_i \text{ (by~\eqref{eq:interval})}.
\end{align*}
Similarly, $V_i \subset \mathcal{I}_i$. \qed

\bibliographystyle{plain}
\bibliography{../aaa}

\begin{thebibliography}{10}

\bibitem{Bertoin96}
Jean Bertoin.
\newblock Iterated {B}rownian motion and stable(1/4) subordinator.
\newblock {\em Statistics \& probability letters}, 27(2):111--114, 1996.

\bibitem{Bing48}
R.~H. Bing.
\newblock A homogeneous indecomposable plane continuum.
\newblock {\em Duke Mathematical Journal}, 15(3):729--742, 1948.

\bibitem{Bing51}
R.~H. Bing.
\newblock Concerning hereditarily indecomposable continua.
\newblock {\em Pacific Journal of Mathematics}, 1(1):43--51, 1951.

\bibitem{Burdzy93}
Krzysztof Burdzy.
\newblock Some path properties of iterated {B}rownian motion.
\newblock In {\em Seminar on Stochastic Processes, 1992}, pages 67--87.
  Springer, 1993.

\bibitem{BK95}
Krzysztof Burdzy and Davar Khoshnevisan.
\newblock The level sets of iterated {B}rownian motion.
\newblock In {\em S{\'e}minaire de Probabilit{\'e}s XXIX}, pages 231--236.
  Springer, 1995.

\bibitem{CM16}
J{\'e}r{\^o}me Casse and Jean-Fran{\c{c}}ois Marckert.
\newblock Processes iterated ad libitum.
\newblock {\em Stochastic Processes and their Applications},
  126(11):3353--3376, 2016.

\bibitem{CK14}
Nicolas Curien and Takis Konstantopoulos.
\newblock Iterating {B}rownian motions, ad libitum.
\newblock {\em Journal of theoretical probability}, 27(2):433--448, 2014.

\bibitem{ES99}
Nathalie Eisenbaum and Zhan Shi.
\newblock Uniform oscillations of the local time of iterated {B}rownian motion.
\newblock {\em Bernoulli}, 5(1):49--65, 1999.

\bibitem{Funaki79}
Tadahisa Funaki.
\newblock Probabilistic construction of the solution of some higher order
  parabolic differential equation.
\newblock {\em Proceedings of the Japan Academy, Series A, Mathematical
  Sciences}, 55(5):176--179, 1979.

\bibitem{KL99}
Davar Khoshnevisan and Thomas~M Lewis.
\newblock Iterated {B}rownian motion and its intrinsic skeletal structure.
\newblock In {\em Seminar on Stochastic Analysis, Random Fields and
  Applications}, pages 201--210. Springer, 1999.

\bibitem{KS20}
Viktor Kiss and S{\l}awomir Solecki.
\newblock Random continuum and {B}rownian motion.
\newblock {\em arXiv preprint arXiv:2004.01367}, 2020.

\bibitem{Knaster22}
Bronis{\l}aw Knaster.
\newblock Un continu dont tout sous-continu est ind{\'e}composable.
\newblock {\em Fundamenta Mathematicae}, 1(3):247--286, 1922.

\bibitem{Moise48}
Edwin~E. Moise.
\newblock An indecomposable plane continuum which is homeomorphic to each of
  its nondegenerate subcontinua.
\newblock {\em Transactions of the American Mathematical Society},
  63(3):581--594, 1948.

\bibitem{MP10}
Peter M{\"o}rters and Yuval Peres.
\newblock {\em Brownian motion}, volume~30.
\newblock Cambridge University Press, 2010.

\bibitem{Nadler92}
Sam Nadler.
\newblock {\em Continuum theory: an introduction}.
\newblock CRC Press, 1992.

\bibitem{OB09}
Enzo Orsingher and Luisa Beghin.
\newblock Fractional diffusion equations and processes with randomly varying
  time.
\newblock {\em The Annals of Probability}, pages 206--249, 2009.

\bibitem{Xiao98}
Yimin Xiao.
\newblock Local times and related properties of multidimensional iterated
  {B}rownian motion.
\newblock {\em Journal of Theoretical Probability}, 11(2):383--408, 1998.

\end{thebibliography}
\end{document}